\renewenvironment{proof}[1][\proofname] {\par\pushQED{\qed}\normalfont\topsep6\p@\@plus6\p@\relax\trivlist\item[\hskip\labelsep\bfseries#1\@addpunct{.}]\ignorespaces}{\popQED\endtrivlist\@endpefalse}
\newcommand{\gap}{\operatorname{gap}}
\newcommand{\CC}{C^{\textnormal{compl}}}
\newcommand{\CCC}{\mathcal{C}^{<}}
\newcommand{\ZZ}{\mathbb{Z}}
\newcommand{\ZZZ}{\mathbb{Z}_{\geq 0}}
\newcommand{\thth}{\textsuperscript{th} }
\newtheorem{proposition}{Proposition}
\newtheorem{lemma}[proposition]{Lemma}
\newtheorem{theorem}[proposition]{Theorem}
\newtheorem{question}[proposition]{Question}
\theoremstyle{definition}
\newtheorem*{remark*}{Remark}
\newtheorem*{theorem*}{Theorem}
\title{Projections of antichains}
\author{Barnab\'{a}s Janzer\thanks{Department of Pure Mathematics and Mathematical Statistics, Centre for Mathematical Sciences, University of Cambridge, Wilberforce Road, Cambridge CB3 0WB, United Kingdom. Email: bkj21@cam.ac.uk}}
\date{\vspace{-21pt}}
\begin{document}
	\maketitle
	
\begin{abstract}
	A subset $A$ of $\mathbb{Z}^n$ is called a weak antichain if it does not contain two elements $x$ and $y$ satisfying $x_i<y_i$ for all $i$. Engel, Mitsis, Pelekis and Reiher showed that for any weak antichain $A$, the sum of the sizes of its $(n-1)$-dimensional projections must be at least as large as its size $|A|$. They asked what the smallest possible value of the gap between these two quantities is in terms of  $|A|$. We answer this question by giving an explicit weak antichain attaining this minimum for each possible value of $|A|$. In particular, we show that sets of the form $A_N=\{x\in\ZZ^n: 0\leq x_j\leq N-1\textnormal{ for all $j$ and } x_i=0\textnormal{ for some $i$}\}$ minimise the gap among weak antichains of size $|A_N|$.
\end{abstract}\vspace{-5pt}	

\section{Introduction}
A subset of $\ZZ^n$ is called a \textit{weak antichain} if it contains no elements $x$ and $y$ such that for all $i$ $x_i<y_i$. Let us denote by $\pi_i$ the projection along the $i$\thth coordinate, that is, $\pi_i: \ZZ^{n}\to\ZZ^{n-1}$ is given by $(x_1\dots,x_n)\mapsto (x_1,\dots,x_{i-1},x_{i+1},\dots,x_n)$. Engel, Mitsis, Pelekis and Reiher \cite{engel2018projection} proved the following projection inequality for weak antichains (which they used to prove an analogous result about weak antichains in the continuous cube $[0,1]^n$).
\begin{theorem}[Engel, Mitsis, Pelekis and Reiher \cite{engel2018projection}]
	For every finite weak antichain $A$ in $\ZZ^n$, we have
	$$|A|\leq \sum_{i=1}^{n}|\pi_i(A)|.$$
\end{theorem}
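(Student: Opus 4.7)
My plan is to induct on the dimension $n$, with an inner induction on the number of distinct values taken by the last coordinate (equivalently, on $|A|$). The base case $n=1$ is immediate, since a weak antichain in $\ZZ$ has at most one element (any two distinct integers are strictly comparable).

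For $n \geq 2$, I would slice $A$ by its last coordinate, setting $A_v := \{y \in \ZZ^{n-1} : (y,v) \in A\}$, and peel off the top slice at $v^\star := \max\{v : A_v \neq \emptyset\}$. Applying the inductive hypothesis to the smaller weak antichain $A'' := A \setminus (A_{v^\star} \times \{v^\star\})$, and then bookkeeping how each projection of $A$ differs from that of $A''$ once the top slice is re-added, the desired inequality for $A$ reduces to establishing
$$|C| \leq \sum_{i=1}^{n-1} |\pi_i(A_{v^\star})|, \qquad \text{where } C := A_{v^\star} \cap \bigcup_{v < v^\star} A_v \subseteq \ZZ^{n-1},$$
with $\pi_i$ overloaded to denote the corresponding coordinate-dropping projection on $\ZZ^{n-1}$.

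The crucial observation is then that $C$ is itself a weak antichain in $\ZZ^{n-1}$: if $y, y' \in C$ satisfied $y_i < y'_i$ for all $i<n$, I could pick any $v < v^\star$ with $y \in A_v$ (such $v$ exists by the definition of $C$) and note that the points $(y, v)$ and $(y', v^\star)$ of $A$ would be strictly comparable in $\ZZ^n$, contradicting the weak antichain property of $A$. Given this, the outer inductive hypothesis at dimension $n-1$ yields $|C| \leq \sum_{i=1}^{n-1}|\pi_i(C)| \leq \sum_{i=1}^{n-1}|\pi_i(A_{v^\star})|$ (the second inequality since $C \subseteq A_{v^\star}$), closing the induction. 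The main obstacle is that individual slices $A_v$ are in general \emph{not} weak antichains, so the $(n-1)$-dimensional hypothesis cannot be applied slice by slice; the essential creative step is to identify the particular overlap $C$, whose weak antichain property comes from the inter-slice constraints rather than from the structure of $A_{v^\star}$ alone.
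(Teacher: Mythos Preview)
Your argument is correct. The bookkeeping identity
\[
\sum_{i=1}^{n}|\pi_i(A)|-|A|=\Bigl(\sum_{i=1}^{n}|\pi_i(A'')|-|A''|\Bigr)+\sum_{i=1}^{n-1}|\pi_i(A_{v^\star})|-|C|
\]
holds exactly as you claim (for $i<n$ the slices $\pi_i(A_v)\times\{v\}$ are disjoint in $\pi_i(A)$, while for $i=n$ the gain from adding the top slice is $|A_{v^\star}\setminus\bigcup_{v<v^\star}A_v|=|A_{v^\star}|-|C|$), and your observation that $C$ is a weak antichain in $\ZZ^{n-1}$ is the right engine to close the induction.

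However, this is a genuinely different proof from the one the paper reproduces. There the argument is non-inductive: one sets $A_1=B_1(A)$ and $A_i=B_i(A\setminus(A_1\cup\dots\cup A_{i-1}))$, shows directly that a weak antichain satisfies $A=A_1\cup\dots\cup A_n$ (otherwise one walks down one coordinate at a time to find a strictly smaller element), and concludes from the injectivity of $\pi_i$ on $A_i$. That decomposition is not just a proof device: the notion of a \emph{layer-decomposable} set ($A=A_1\cup\dots\cup A_n$) is used repeatedly afterwards, since the intermediate compressions $C_i$ may destroy the weak-antichain property but preserve layer-decomposability. Your inductive proof is clean and self-contained, but it does not produce this structural byproduct; conversely, the paper's argument avoids the double induction and the somewhat delicate identification of the overlap set $C$.
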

The same authors asked the following question.
\begin{question}\label{question_weakantichains}
	What is the smallest possible value $g(n,m)$ of $\gap(A)=\sum_{i=1}^{n}|\pi_i(A)|-|A|$ as $A$ varies over weak antichains in $\ZZ^n$ of size $m$?
\end{question}

Note that the question is uninteresting for (strong) antichains $A$ in $\ZZ^n$, as we trivially have $|\pi_i(A)|=|A|$ for all $i$ in this case. Furthermore, a weak antichain in $\{0,1\}^n$ is just a subset of $\{0,1\}^n$ not containing both the zero vector and the vector with all entries equal to 1. So classical results about set systems (such as Sperner's theorem, see e.g. \cite{bollobas1986combinatorics}) are not particularly relevant here.\medskip

In this paper we answer Question \ref{question_weakantichains}. To state the result, we need some definitions. Let $\ZZZ$ denote the set of non-negative integers, and let $X_n$ be the subset of $\ZZZ^n$ consisting of elements that have at least one coordinate which is zero. Note that any subset of $X_n$ is a weak antichain. For given $x, y\in X_n$, let $T=\{i: x_i\not = y_i \}$, let $x'=(x_i)_{i\in T}$, $y'=(y_i)_{i\in T}$. Write $x<y$ if $\max x'<\max y'$ or ($\max x'=\max y'$ and $\max\{i: x'_i=\max x'\}<\max\{i: y'_i=\max y'\}$). Then $<$ defines a total order on $X_n$. We will call this the \textit{balanced order} on $X_n$.
\begin{restatable}{theorem}{initialsegments}\label{theorem_initialsegments}
	For every $n\geq 2$ and $m\geq 0$, the initial segment of size $m$ of the balanced order on $X_n$ minimises the gap among weak antichains in $\ZZ^n$ of size $m$. In particular, for every  positive integer $N$, the set
	$$A_N=\{x\in \ZZZ^n: 0\leq x_i\leq N-1\textnormal{ for all $i$, and $x_j=0$ for some $j$}\}$$
	minimises the gap among weak antichains of size $|A_N|=N^n-(N-1)^n$.
\end{restatable}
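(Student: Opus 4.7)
My plan is to prove the theorem in two stages. First, I would reduce to the case where $A\subseteq X_n$ via a compression-style argument; second, within $X_n$, I would show that the initial segment of the balanced order minimises the gap via an iterative swap. Crucially, since every subset of $X_n$ is automatically a weak antichain (as noted in the paper), in the second stage one need not worry about preserving the antichain property.

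For Stage 1, given a weak antichain $A\subseteq\mathbb{Z}^n$ with some $x\in A$ having all coordinates strictly positive, the plan is to replace $x$ by some $x'\in X_n$ while maintaining the weak antichain property and not increasing $|\pi_j(A)|$ for any $j$. A natural candidate is to decrease a coordinate $x_i$ of $x$ as far as possible (e.g., with $i$ the largest index achieving $\max_j x_j$) subject to $x'\notin A\setminus\{x\}$. The subtlety is that pushing $x$ carelessly may create a strict chain with another element of $A$; consequently the push rule must be chosen with care and justified by a case analysis based on the chosen extremal element. Iterating this operation yields an $\widetilde A\subseteq X_n$ with $|\widetilde A|=|A|$ and $\gap(\widetilde A)\leq\gap(A)$.

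For Stage 2, suppose $A\subseteq X_n$ is not the initial segment of size $|A|$. Then there exist $y\in A$ and $x\in X_n\setminus A$ with $x<y$ in the balanced order; set $A'=(A\setminus\{y\})\cup\{x\}$. The key claim is that, for an appropriate such pair $(x,y)$, one has $\gap(A')\leq\gap(A)$. For each coordinate $i$,
\[
|\pi_i(A')|-|\pi_i(A)|=\mathbf{1}\bigl[\pi_i(x)\notin\pi_i(A\setminus\{y\})\bigr]-\mathbf{1}\bigl[\pi_i(y)\notin\pi_i(A\setminus\{y\})\bigr],
\]
so the target inequality reduces to: the number of $i$ for which $\pi_i(x)$ is genuinely new in $A\setminus\{y\}$ is at most the number of $i$ for which $y$ is the unique preimage of $\pi_i(y)$ in $A$. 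Termination is immediate, since each swap strictly decreases $\sum_{z\in A}\mathrm{rank}(z)$ in the balanced order.

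\textbf{Main obstacle.} The principal challenge is the swap inequality in Stage 2. The balanced order has two comparison regimes — comparing $\max x'$ and $\max y'$ on the set of differing coordinates, and tie-breaking by the position of the maximum — so the proof naturally splits into cases. The case $\max x'<\max y'$ should be tractable by directly bounding the number of "new" projections using the smaller maxima of $x$. The tie-broken case with $\max x'=\max y'$ is more delicate and likely requires a sub-induction on the recursive structure of the balanced order within a single level $L_k=\{z\in X_n:\max_j z_j=k\}$, exploiting the fact that swaps within $L_k$ only move the position of the maximum to a smaller index. A secondary obstacle in Stage 1 is ensuring that compression preserves both the weak antichain property and projection monotonicity simultaneously; since these two requirements can pull in different directions, the correct compression rule may need to depend on the global structure of $A$ rather than on $x$ alone.
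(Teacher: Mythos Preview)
Your two-stage outline matches the paper's, but both stages are missing the ideas that actually make the argument go through.

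\medskip

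\textbf{Stage 1.} The paper does \emph{not} push individual elements into $X_n$ while trying to preserve the weak antichain property. Your worry that ``pushing $x$ carelessly may create a strict chain'' is exactly why this is the wrong framework: if $x\in A$ has all coordinates positive and you set $x_i'\!=\!0$, then any $z\in A$ with $z_j>x_j$ for $j\neq i$ (which the weak antichain condition allows, since it only forces $z_i\le x_i$) now strictly dominates $x'$. The paper sidesteps this entirely by relaxing ``weak antichain'' to the weaker notion \emph{layer-decomposable} (namely $A=A_1\cup\dots\cup A_n$ where $A_k$ is the $k$th bottom layer of what remains), and then compresses whole bottom layers $B_i(A)$ to the hyperplane $x_i=0$ in the order $i=1,\dots,n$. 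Layer-decomposability is preserved at each step even though the antichain property is not, and after all $n$ compressions (plus further ``complete'' compressions) one lands on a \emph{down-set} in $X_n$. Your proposal does not reach down-sets, and the down-set property is essential for Stage~2.

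\medskip

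\textbf{Stage 2.} The bare swap you describe is not what the paper does, and the inequality you aim for (``new projections of $x$'' $\le$ ``unique projections of $y$'') is not available for a general $A\subseteq X_n$, nor even for a general down-set. The paper first applies, by induction on $n$, \emph{balanced $i$-compressions} $\CCC_i$ that replace each slice $\{x_i=a\}$ by an initial segment in $X_{n-1}$. Showing that $\CCC_i$ does not increase the gap already needs the inductive hypothesis, and showing that $\CCC_i$ preserves the down-set property requires a separate extremal statement proved alongside: initial segments \emph{maximise} $|S(I)|$, where $S(A)=\{x\in\mathbb{Z}_{>0}^n:\text{setting any }x_k=0\text{ lands in }A\}$. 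Only once $A$ is a down-set fixed by every $\CCC_i$ does one get the structural fact (the paper's Lemma~\ref{lemma_<compressedproperty}) that the minimal missing $x$ has a \emph{unique} zero coordinate $s$; then $\pi_j(x)$ is already in $\pi_j(A)$ for every $j\neq s$ (because $A$ is a down-set and the element with $x_j$ decreased by $1$ is still in $X_n$), so $x$ contributes at most one new projection and the swap is immediate. Your case split on $\max x'$ versus $\max y'$ plays no role in the paper and it is not clear how it would substitute for this structure.
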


In terms of asymptotic lower bounds on the gap, this gives the following result.

\begin{restatable}{theorem}{explicit}\label{theorem_explicit}
	For every $n\geq 2$ and $m\geq 0$, we have
	$$g(n,m)\geq c_nm^{1-1/(n-1)},$$ where $c_n=\frac{1}{2}(n-1)n^{1/(n-1)}$.
	Moreover, for every $n\geq 2$, we have $$g(n,m)\sim c_nm^{1-1/(n-1)} \textnormal{ as }m\to\infty.$$
\end{restatable}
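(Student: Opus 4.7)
The plan is to derive Theorem \ref{theorem_explicit} from Theorem \ref{theorem_initialsegments}: the tight asymptotic upper bound comes from a direct computation with the family $A_N$, and the non-asymptotic lower bound reduces to analyzing $\gap(I_m)$ for general initial segments of the balanced order.

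First I would analyze $A_N$. One checks that $\pi_i(A_N) = \{0,\ldots,N-1\}^{n-1}$ for every $i$, since any tuple in the cube lifts to an element of $A_N$ by inserting a $0$ in position $i$. Hence $\sum_i |\pi_i(A_N)| = nN^{n-1}$ and
\[
\gap(A_N) = nN^{n-1} - N^n + (N-1)^n = \binom{n}{2}N^{n-2} - \binom{n}{3}N^{n-3} + \cdots
\]
by the binomial theorem. Since $|A_N| = nN^{n-1} + O(N^{n-2})$, and the definition of $c_n$ directly gives the identity $c_n \cdot n^{(n-2)/(n-1)} = \binom{n}{2}$, we obtain $\gap(A_N) \sim c_n |A_N|^{1-1/(n-1)}$; this provides the matching upper bound for the asymptotic statement along $m = |A_N|$. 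A second-order expansion will additionally show $\gap(A_N) > c_n |A_N|^{1-1/(n-1)}$ for $n \geq 3$, with a slack of order $N^{n-3}$ (and equality at every step for $n = 2$, consistent with $g(2,m) = 1$).

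For the lower bound at arbitrary $m$, by Theorem \ref{theorem_initialsegments} it suffices to prove $\gap(I_m) \geq c_n m^{1-1/(n-1)}$. For $m$ between $|A_N|$ and $|A_{N+1}|$, I would decompose $I_m = A_N \cup S$ with $S$ a prefix of the balanced order on $A_{N+1}\setminus A_N$, so that
\[
\gap(I_m) = \gap(A_N) + \sum_{i=1}^n \bigl|\pi_i(S)\setminus\pi_i(A_N)\bigr| - |S|,
\]
and each $\pi_i(S) \setminus \pi_i(A_N)$ consists of tuples with a coordinate equal to $N$. For a single $x \in A_{N+1}\setminus A_N$ with max-set $P(x) = \{j : x_j = N\}$, the projection $\pi_i(x)$ lies outside $\pi_i(A_N)$ precisely when $P(x) \not\subseteq \{i\}$, so $x$ contributes to $n-1$ of the sets if $|P(x)| = 1$ and to all $n$ of them otherwise. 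To turn this per-element count into a usable lower bound I need to account for collisions, since different elements of $S$ can produce the same projection in a given coordinate.

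The collision bookkeeping is the main obstacle. The balanced order is designed so that elements of $A_{N+1}\setminus A_N$ are added in order of increasing (and appropriately late) max-sets, and I would exploit this to count the new projections precisely (or at least obtain a tight lower bound) as $|S|$ grows. Combined with the cushion $\gap(A_N) - c_n|A_N|^{1-1/(n-1)} > 0$ from the first step, this should yield the uniform inequality $\gap(I_m) \geq c_n m^{1-1/(n-1)}$; the asymptotic equivalence $g(n,m) \sim c_n m^{1-1/(n-1)}$ then follows immediately from the two bounds together with the fact that $|A_{N+1}|/|A_N| \to 1$.
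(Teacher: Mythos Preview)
Your route is genuinely different from the paper's, and it carries a real gap.

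\textbf{How the paper proceeds.} The paper proves Theorem~\ref{theorem_explicit} \emph{before} and \emph{independently of} Theorem~\ref{theorem_initialsegments}. Using Lemma~\ref{lemma_downset} it reduces to down-sets in $X_n$, then argues by induction on $n$: it slices $A$ into the sets $L_a=\{x':(x',a)\in A,\ x'\in X_{n-1}\}$ and the ``corner'' $K=\pi_n(B_n(A))\setminus L_0$, applies the inductive bound $g(n-1,\cdot)$ to each $L_a$, bounds $|K|$ via the Loomis--Whitney inequality together with $\sum_{i<n}|\pi_i(K)|\le |L_0|$, and finishes with a one-variable optimisation in $x=|L_0|$ that miraculously reproduces the constant $c_n$. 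The upper bound for arbitrary $m$ is obtained by the explicit construction $A_N\cup B$ with $B$ a small box disjoint from $A_N$. None of this uses the exact extremal result.

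\textbf{What you do differently, and where it is incomplete.} You invoke Theorem~\ref{theorem_initialsegments} to reduce to computing $\gap(I_m)$, then propose to interpolate between $A_N$ and $A_{N+1}$ by writing $I_m=A_N\cup S$ and tracking, element by element, which projections are new. This is logically allowed (the paper proves Theorem~\ref{theorem_initialsegments} without relying on Theorem~\ref{theorem_explicit}), but you explicitly defer the ``collision bookkeeping'' as the main obstacle, and that is precisely where the content lies. The cushion $\gap(A_N)-c_n|A_N|^{1-1/(n-1)}$ is only of order $N^{n-3}$ (order $1$ when $n=3$), while both $m-|A_N|$ and $c_n m^{1-1/(n-1)}-c_n|A_N|^{1-1/(n-1)}$ are of order $N^{n-2}$ across the range $|A_N|\le m\le |A_{N+1}|$; so you cannot absorb errors and must control $\gap(I_m)$ essentially exactly across the whole interval. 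Likewise, your asymptotic upper bound for general $m$ is not justified by ``$|A_{N+1}|/|A_N|\to 1$'' alone: from $I_m\subseteq A_{N+1}$ one only gets $\gap(I_m)\le n(N+1)^{n-1}-m$, which is off by a constant factor, so the upper bound at arbitrary $m$ again hinges on the unwritten collision analysis (or on a separate construction like the paper's $A_N\cup B$). In short, your plan may be completable, but as written the hard step is missing, and the paper's inductive Loomis--Whitney argument avoids this bookkeeping entirely.
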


Our proofs have the following structure. Starting with any weak antichain, we modify it into a subset of $X_n$. This modification will be made step-by-step, and at some points our set will not be a weak antichain. However, it will always satisfy a certain weaker property, which we will call 'layer-decomposability'. Studying subsets of $X_{n}$ is much simpler than studying general weak antichains, and we will finish the proof of Theorem \ref{theorem_initialsegments} using induction on $n$ and codimension-1 compressions. For our proof to work we will need to show that initial segments of the balanced order are extremal for another property as well. Instead of deducing the asymptotic result from Theorem \ref{theorem_initialsegments}, we will prove it directly and before Theorem \ref{theorem_initialsegments}, because its proof is simpler and motivates some of the steps in the proof of Theorem \ref{theorem_initialsegments}.

\section{Compressing to a down-set in $X_n$}
Recall that we denote $X_n=\{x\in \ZZZ^n: x_i=0\textnormal{ for some $i$} \}$. In this section our aim is to prove the following lemma. 

\begin{lemma}\label{lemma_downset}
	If $A$ is a finite weak antichain in $\mathbb{Z}^n$, then there is a weak antichain $A'\subseteq X_n$ of the same size having $|\pi_i(A')|\leq |\pi_i(A)|$ for each $i$ which is a down-set, i.e., if $x, y\in \ZZZ^n$, $x_i\leq y_i$ for all $i$ and $y\in A'$ then $x\in A'$.
\end{lemma}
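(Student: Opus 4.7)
My plan is to combine an initial translation with iterated codimension-one compressions, and to handle the fact that the weak antichain condition is not preserved along the way by introducing an auxiliary combinatorial property.

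First I would translate $A$ coordinate-wise so that $A\subseteq\ZZZ^n$ and for each $j$ some element of $A$ has $j$-th coordinate equal to zero; this preserves $|A|$, every $|\pi_i(A)|$, and the weak antichain property. Then, for each $i$, consider the codimension-one compression $C_i$ that for every $\bar x\in\ZZZ^{n-1}$ replaces the column $\{t:(x_1,\dots,x_{i-1},t,x_{i+1},\dots,x_n)\in A\}$ by $\{0,1,\dots,k-1\}$ where $k$ is the size of that column. A direct check shows that $C_i$ preserves $|A|$ and $\pi_i(A)$, cannot increase $|\pi_j(A)|$ for $j\neq i$, and strictly decreases the potential $\Phi(A)=\sum_{y\in A}\sum_j y_j$ unless $A$ is already stable under $C_i$; since stability under every $C_i$ is equivalent to $A$ being a down-set in $\ZZZ^n$, iterating the compressions must terminate in a down-set $A'$ satisfying the required projection inequalities.

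The catch is that $C_i$ can destroy the weak antichain property: for example, in $\ZZ^2$ the weak antichain $\{(5,0),(7,0),(2,1),(3,1)\}$ compresses under $C_1$ to $\{(0,0),(1,0),(0,1),(1,1)\}$, which contains the strict pair $(0,0)<(1,1)$. Moreover, being a down-set by itself is not enough to force $A'\subseteq X_n$, as $\{0,1\}^n$ illustrates. To handle both issues, I would introduce the ``layer-decomposability'' property foreshadowed in the introduction, chosen so that (i) every weak antichain is layer-decomposable, (ii) each $C_i$ preserves layer-decomposability, and (iii) any layer-decomposable down-set in $\ZZZ^n$ lies in $X_n$. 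Given (iii), such a down-set is automatically a weak antichain, because it cannot contain any $y$ with all coordinates at least $1$ (otherwise $y-(1,\dots,1)$ is also in it and strictly below $y$). The main obstacle is pinning down the correct definition of layer-decomposability: it must be weak enough to survive through the non-weak-antichain intermediate configurations produced by the $C_i$, yet strong enough that a stable (i.e., down-set) layer-decomposable set is forced into $X_n$. Once such a property is defined and (i)--(iii) are checked, the compression scheme above yields the desired $A'$.
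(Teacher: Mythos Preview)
Your own example already shows that no notion of layer-decomposability can satisfy (i)--(iii) simultaneously for the complete compression operator you chose. Starting from the weak antichain $\{(5,0),(7,0),(2,1),(3,1)\}$ in $\ZZ^2$, one application of your $C_1$ lands on $\{0,1\}^2$, which is already a down-set and hence stable under every further $C_i$. If (i) and (ii) held, then $\{0,1\}^2$ would have to be layer-decomposable; but $\{0,1\}^2$ is a down-set not contained in $X_2$, so (iii) would fail. Conversely, if (iii) held then $\{0,1\}^2$ is not layer-decomposable, contradicting (i) and (ii). So the obstacle you flag at the end is not merely ``pinning down the correct definition'': with full column-compressions there is no such definition.

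The paper circumvents this with a two-stage compression. In the first stage it uses a much milder operator, also called $C_i$, that only moves the \emph{minimum} of each column down to $0$ and leaves every other entry where it is. Applied once each in the order $C_1,C_2,\dots,C_n$, these do preserve the concrete layer-decomposability property $A=A_1\cup\dots\cup A_n$ (with $A_k=B_k(A\setminus(A_1\cup\dots\cup A_{k-1}))$), and after the $n$-th step the set already lies in $X_n$. Only then does the paper switch to the full column-compressions $\CC_i$ you describe; these are now harmless, because once $A\subseteq X_n$ every column whose off-$i$ coordinates are all positive meets $A$ in at most the single point with $i$-th coordinate $0$, so $\CC_i(A)\subseteq X_n$ automatically and one can iterate to a down-set. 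The missing ingredient in your plan is precisely this gentler first pass that reaches $X_n$ before any full compression is attempted.
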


We start by recalling the proof of Engel, Mitsis, Pelekis and Reiher \cite{engel2018projection} that $\gap(A)\geq 0$ for every finite weak antichain. For any finite set $A\subseteq \ZZ^n$, define the $i$\thth bottom layer $B_i(A)$ to be the set of elements with minimal $i$\thth coordinate, i.e., $$B_i(A)=\{x\in A: \textnormal{whenever $y\in A$ with $y_j=x_j$ for all $j\not=i$ then $y_i\geq x_i$}\}.$$
Furthermore, define $A_1,\dots,A_n$ inductively by setting ($A_1=B_1(A)$ and)
$$A_{i}=B_i\left(A\setminus(A_1\cup\dots\cup A_{i-1})\right).$$
Observe that for a weak antichain we have $A=A_1\cup\dots\cup A_n$. Indeed, if $x\in A\setminus (A_1\cup\dots\cup A_n)$ then we may inductively find $x^{(i)}\in A_{n-i}$ (for $0\leq i\leq n-1$) such that $x^{(i)}_j<x_j$ for $j\geq n-i$ and $x^{(i)}_j=x_j$ for $j<n-i$. Then $x^{(n-1)}$ has all coordinates smaller than $x$, giving a contradiction.

We will call a finite set $A$ with $A=A_1\cup\dots\cup A_n$ \textit{layer-decomposable}. Note that $\pi_i$ restricted to $A_i$ is injective, hence $\sum_{i=1}^n|\pi_i(A)|\geq \sum_{i=1}^n|A_i|=|A|$ for layer-decomposable sets (and in particular for weak antichains).

Now assume $A\subseteq \ZZZ^n$. Define the \textit{$i$-compression} $C_i(A)$ of $A$ by replacing each $x\in B_i(A)$ by $(x_1,\dots, x_{i-1},0,x_{i+1},\dots,x_n)$. Note that $|C_i(A)|=|A|$.

\begin{lemma}\label{lemma_compression}
	Let $A\subseteq \ZZZ^n$ be any finite set. For every $i\not=j$, $\pi_j(C_i(A))\subseteq C_i(\pi_j(A))$. In particular, $|\pi_j(C_i(A))|\leq |\pi_j(A)|$.
\end{lemma}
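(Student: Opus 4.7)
The plan is to unfold the definition of $C_i$ as $C_i(A)=(A\setminus B_i(A))\cup c_i(B_i(A))$, where $c_i(x_1,\dots,x_n)=(x_1,\dots,x_{i-1},0,x_{i+1},\dots,x_n)$ is the map that zeros out the $i$\thth coordinate, and then to verify the containment pointwise. So I would pick an arbitrary $z\in\pi_j(C_i(A))$, write $z=\pi_j(w)$ with $w\in C_i(A)$, and split into two cases according to whether $w\in A\setminus B_i(A)$ or $w=c_i(x)$ for some $x\in B_i(A)$.

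In the first case, the definition of $B_i(A)$ gives some $x'\in A$ agreeing with $w$ in every coordinate except the $i$\thth, with $x'_i<w_i$. Because $i\neq j$, the element $\pi_j(x')\in\pi_j(A)$ then agrees with $z=\pi_j(w)$ in every coordinate other than the $i$\thth and has strictly smaller $i$\thth coordinate. This forces $z\notin B_i(\pi_j(A))$, so that $z\in\pi_j(A)\setminus B_i(\pi_j(A))\subseteq C_i(\pi_j(A))$.

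In the second case, $z$ agrees with $x$ in every coordinate away from $i$ and satisfies $z_i=0$. Let $M$ be the minimum of $x'_i$ over all $x'\in A$ agreeing with $x$ on every coordinate outside $\{i,j\}$, and pick $x^*\in A$ attaining this minimum. Any element of $\pi_j(A)$ agreeing with $\pi_j(x^*)$ outside the $i$\thth coordinate lifts to some $x''\in A$ agreeing with $x^*$ outside $\{i,j\}$, hence by minimality has $i$\thth coordinate at least $M$; this says exactly that $\pi_j(x^*)\in B_i(\pi_j(A))$, so $c_i(\pi_j(x^*))\in C_i(\pi_j(A))$. By construction this compressed element has $i$\thth coordinate zero and agrees with $z$ everywhere else, so it equals $z$. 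The numerical inequality then follows because compression preserves cardinality: $|\pi_j(C_i(A))|\leq|C_i(\pi_j(A))|=|\pi_j(A)|$.

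The main obstacle is bookkeeping rather than anything deep, and there is one subtle point worth flagging: in the second case the correct auxiliary element $x^*$ must be extracted by minimizing over $A$-members agreeing with $x$ outside $\{i,j\}$, not outside $\{i\}$. Minimizing over the larger family is exactly what $\pi_j$ forces on us, because $\pi_j$ forgets the $j$\thth coordinate and so collapses elements of $A$ that differ only there; getting this condition right is what makes $\pi_j(x^*)$ land in $B_i(\pi_j(A))$.
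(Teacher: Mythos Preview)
Your proof is correct and follows essentially the same approach as the paper: the same two-case split (element unmoved by the compression versus element with $i$\thth coordinate zeroed), with identical reasoning in the first case. In the second case the paper is terser---it just uses that $c_i(\pi_j(A))\subseteq C_i(\pi_j(A))$, which amounts to your construction of $x^*$ without naming it---so your version is a slightly more unpacked rendition of the same argument.
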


	(When considering $C_i(\pi_j(A))$, we mean compressing along the coordinate labelled by $i$, not along the $i$\thth remaining coordinate.)

\begin{proof}
	Suppose $(x_1,\dots, x_{j-1},x_{j+1},\dots,x_n)\in \pi_j(C_i(A))$ so that there is an $x\in C_i(A)$ with $k$\thth coordinate $x_k$ for all $k$.
	\begin{itemize}
		\item If $x_i=0$ then there is a $y\in B_i(A)$ with $x_k=y_k$ for $k\not =i$. So $(y_1,\dots, y_{j-1},y_{j+1},\dots,y_n)\in \pi_j(A)$. But this vector agrees with $(x_1,\dots, x_{j-1},x_{j+1},\dots,x_n)$ in all entries except maybe the one labelled by $i$, so (since $x_i=0$) we have $(x_1,\dots, x_{j-1},x_{j+1},\dots,x_n)\in C_i(\pi_j(A))$.
		\item If $x_i\not =0$ then there is an $x\in A\setminus B_i(A)$ with $k$\thth coordinate $x_k$ for all $k$, and $y\in B_i(A)$ with $y_k=x_k$ for $k\not =i$ and $y_i<x_i$. But then $\pi_j(y)$ and $\pi_j(x)$ agree in all coordinates except the $i$\thth one, which shows $\pi_j(x)\not \in B_i(\pi_j(A))$ and hence $\pi_j(x)\in C_i(\pi_j(A))$ as claimed.
	\end{itemize}
\end{proof}

Say that $A$ is $i$-compressed if $C_i(A)=A$, i.e., $B_i(A)=\{x\in A: x_i=0\}$.
\begin{lemma}\label{lemma_compressionstep}
	Suppose that $A\subseteq \ZZZ^n$ is finite, layer-decomposable (i.e., $A=A_1\cup\dots\cup A_n$), and $k$-compressed for all $k<i$. Then $A'=C_i(A)$ satisfies
	\begin{enumerate}[(i)]
		\item $A'$ is $k$-compressed for all $k\leq i$.\label{lemmapart_compressioni}
		\item $A'$ is layer-decomposable
		.\label{lemmapart_compressionii}
	\end{enumerate}
\end{lemma}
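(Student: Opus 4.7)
I will handle the two parts in sequence. For (i), the case $k = i$ is immediate from the definition of $C_i$: every non-empty $i$-column of $A'$ contains a point with $i$th coordinate $0$ (the original minimum, if it was already at $0$, or its pushed-down image otherwise), hence $B_i(A') = \{x \in A' : x_i = 0\}$. For $k < i$, I would reduce to a two-dimensional problem by fixing all coordinates other than $k$ and $i$. Writing the resulting planar slice as $S$, with coordinates $(a,b)$ denoting the $k$- and $i$-entries, the set $T := A \cap S$ is $k$-compressed in $S$ (every non-empty line $\{b = b_0\}$ of $T$ contains $(0,b_0)$), and since every $i$-column of $A$ lies entirely in some such slice, $C_i(A) \cap S = C_i(T)$. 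It then suffices to show that $C_i(T)$ is $k$-compressed, which I would verify by a short case analysis: if $(a_1,b_0) \in C_i(T)$ with $b_0 > 0$, then $(a_1,b_0) \in T$ and some $(a_1,b') \in T$ has $b' < b_0$ (otherwise $(a_1,b_0)$ would be the minimum of its $i$-column in $T$ and would have been pushed down); $k$-compression of $T$ places both $(0,b_0)$ and $(0,b')$ in $T$, and since $b' < b_0$ the point $(0,b_0)$ is not the minimum of its $i$-column in $T$, so it survives $C_i$. The case $b_0 = 0$ is similar, using non-emptiness of the $i$-column at $a = 0$ in $T$.

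For (ii), I would first compute the early layers explicitly. Since $A$ is $k$-compressed for every $k < i$, a short induction on $l$ gives $A_l = \{x \in A : x_1, \dots, x_{l-1} > 0,\ x_l = 0\}$ for each $l < i$, so $A_1 \cup \dots \cup A_{i-1} = \{x \in A : x_j = 0 \text{ for some } j < i\}$ and $A_i = \{x \in B_i(A) : x_1, \dots, x_{i-1} > 0\}$. Combined with (i), exactly the same formula holds for $A'_l$ for every $l \leq i$. Let $c : A \to A'$ denote the bijection induced by $C_i$, acting as the identity off $M := \{x \in B_i(A) : x_i > 0\}$ and sending each $x \in M$ to $(x_1, \dots, x_{i-1}, 0, x_{i+1}, \dots, x_n)$. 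Using the explicit formulas I would verify $c(A_l) = A'_l$ for every $l \leq i$: for $l < i$ this is immediate because $c$ preserves the first $i-1$ coordinates; for $l = i$, each $x \in A_i$ maps to a point $c(x)$ with first $i-1$ coordinates positive and $i$th coordinate $0$, and pulling back any $y \in A'_i$ via $c^{-1}$ produces an element of $B_i(A)$ with the first $i-1$ coordinates positive, i.e.\ an element of $A_i$.

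Setting $\tilde A = A \setminus (A_1 \cup \dots \cup A_i)$, I would then observe that $c$ is the identity on $\tilde A$: any $x \in \tilde A$ has $x_1, \dots, x_{i-1} > 0$, so if $x \in B_i(A)$ then $x \in A_i$, contradicting $x \in \tilde A$. Hence $A' \setminus (A'_1 \cup \dots \cup A'_i) = c(\tilde A) = \tilde A$. Since the definition of $A'_l$ for $l > i$ runs the iterated bottom-layer construction on exactly this common set, we obtain $A'_l = A_l$ for every $l > i$, and combining with the layer-decomposability of $A$ (which supplies $\tilde A = A_{i+1} \cup \dots \cup A_n$) yields $A' = A'_1 \cup \dots \cup A'_n$.

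I expect the main obstacle to be bookkeeping rather than a deep conceptual issue: one must track the action of $c$ on each layer, verify the explicit form of the early layers carefully, and check that the iterated layer decomposition interacts well with the compression. The two-dimensional reduction in part (i) is essentially the classical observation that codimension-$1$ compressions in orthogonal directions commute, and once the explicit formulas for the early layers are in hand, part (ii) reduces to a sequence of routine set-theoretic identifications.
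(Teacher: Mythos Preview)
Your argument is correct. Part (ii) is essentially identical to the paper's: both compute the explicit formulas $A_l=\{x\in A:x_1,\dots,x_{l-1}>0,\ x_l=0\}$ for $l<i$ (and the analogous formulas for $A'$ with $l\leq i$), then identify $A'\setminus(A'_1\cup\dots\cup A'_i)$ with $A\setminus(A_1\cup\dots\cup A_i)$ to conclude. Your bijection $c$ makes this identification slightly more explicit, but the content is the same.

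Part (i) is where you diverge. The paper does not slice down to two dimensions; instead it invokes Lemma~\ref{lemma_compression} to get $|\pi_j(A')|\leq|\pi_j(A)|$, observes that $C_i(B_j(A))\subseteq A'$ is a set with $j$th coordinate identically zero whose $j$th projection already has size $|\pi_j(A)|$, and concludes by a cardinality squeeze that $B_j(A')=C_i(B_j(A))=\{x\in A':x_j=0\}$. Your approach is a direct element-chase in each $(k,i)$-plane, which is more elementary and self-contained (it does not appeal to the earlier projection lemma), whereas the paper's argument is shorter and exploits work already done. Both are perfectly valid; the paper's version has the advantage of reusing Lemma~\ref{lemma_compression}, while yours would stand on its own even without that lemma.
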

\begin{proof}
	Let $j<i$. By Lemma \ref{lemma_compression}, $|\pi_j(A')|\leq |\pi_j(A)|$. But, since $B_j(A)=\{x\in A: x_j=0\}$, $C_i(B_j(A))$ is a subset of $A'$ having $j$\thth coordinate constant zero and $j$\thth projection of size $|\pi_j(A)|$. It follows that $B_j(A')=C_i(B_j(A))=\{x\in A': x_j=0\}$, giving (\ref{lemmapart_compressioni}).
	
	We now show (\ref{lemmapart_compressionii}). Since $A$ is $k$-compressed for all $k<i$, easy induction on $k$ gives
	\[A_k=\{x\in A: x_k=0\textnormal{ but $x_l\not =0$ for $l<k$} \}\hspace{12pt}\textnormal{for all $k<i$},\]
	and similarly
	\[A'_k=\{x\in A': x_k=0\textnormal{ but $x_l\not =0$ for $l<k$} \}\hspace{12pt}\textnormal{for all $k\leq i$}.\]
	But then we have 
	\begin{align*}C_i(A\setminus (A_1\cup \dots\cup A_{i-1}))&=C_i(\{x\in A: x_k\not=0\textnormal{ for all $k<i$} \})\\
	&=\{x\in C_i(A): x_k\not=0\textnormal{ for all $k<i$} \}\\
	&=A'\setminus (A'_1\cup \dots\cup A'_{i-1}).
	\end{align*}
	It follows that $A'\setminus(A'_1\cup \dots\cup A'_i)=\{x\in A'\setminus (A'_1\cup \dots\cup A'_{i-1}): x_i\not =0\}=A\setminus(A_1\cup \dots\cup A_{i})$. Using $A=A_1\cup\dots\cup A_n$, we get (\ref{lemmapart_compressionii}).
\end{proof}

\begin{lemma}\label{lemma_compressedset}
	If $A\subseteq \ZZZ^n$ is a finite weak antichain, then $A'=C_n(C_{n-1}(\dots(C_1(A))\dots))$ satisfies
	\begin{enumerate}[(i)]
		\item $|\pi_i(A')|\leq |\pi_i(A)|$ for each $i$.\label{lemmapart_resultsofcompressioni}
		\item $A'$ is $k$-compressed for all $k$. \label{lemmapart_resultsofcompressionii}
		\item $A'=A'_1\cup\dots\cup A'_n$.\label{lemmapart_resultsofcompressioniii}
		\item $A'_k=\{x\in A': x_k=0\textnormal{ but $x_l\not =0$ for $l<k$} \}$ for all $k$.\label{lemmapart_resultsofcompressioniv}
		\item $A'\subseteq X_n=\{x\in \ZZZ^n: x_i=0\textnormal{ for some $i$}\}$.\label{lemmapart_resultsofcompressionv}
	\end{enumerate}
\end{lemma}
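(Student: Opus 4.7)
My plan is to prove all five properties in a single induction, by iteratively applying Lemma \ref{lemma_compressionstep} to the sequence $A^{(0)}=A$, $A^{(i)}=C_i(A^{(i-1)})$ for $i=1,\dots,n$, so that $A'=A^{(n)}$. The base case uses that $A$, being a weak antichain, is already layer-decomposable (this is explicitly noted in the text preceding Lemma \ref{lemma_compressionstep}), and is vacuously $k$-compressed for all $k<1$. Then at step $i$, the hypotheses of Lemma \ref{lemma_compressionstep} are met, so $A^{(i)}$ is layer-decomposable and $k$-compressed for all $k\leq i$. After $n$ steps this yields (\ref{lemmapart_resultsofcompressionii}) and (\ref{lemmapart_resultsofcompressioniii}).

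For (\ref{lemmapart_resultsofcompressioni}), I would separately track $|\pi_i(A^{(k)})|$ through the chain of compressions. For $k\neq i$, Lemma \ref{lemma_compression} gives $|\pi_i(A^{(k)})|\leq |\pi_i(A^{(k-1)})|$. For $k=i$, the compression $C_i$ only modifies the $i$\thth coordinate, and $\pi_i$ forgets precisely that coordinate, so $\pi_i(A^{(i)})=\pi_i(A^{(i-1)})$. Telescoping gives (\ref{lemmapart_resultsofcompressioni}).

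For (\ref{lemmapart_resultsofcompressioniv}), I would argue by induction on $k$, using that $A'$ is $k$-compressed for every $k$ (from (\ref{lemmapart_resultsofcompressionii})). The case $k=1$ is immediate since $A'_1=B_1(A')=\{x\in A':x_1=0\}$. For the inductive step, by the hypothesis we have
\[A'\setminus(A'_1\cup\cdots\cup A'_{k-1})=\{x\in A': x_l\neq 0 \text{ for all } l<k\}=:S,\]
and I need $B_k(S)=\{x\in S:x_k=0\}$. The inclusion $\supseteq$ is immediate. For $\subseteq$, if $x\in B_k(S)$ had $x_k>0$, then since $A'$ is $k$-compressed there exists $y\in A'$ agreeing with $x$ away from coordinate $k$ with $y_k=0$; this $y$ lies in $S$ (its coordinates for $l<k$ agree with those of $x$, hence are nonzero), contradicting minimality of $x$ in $S$. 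Finally, (\ref{lemmapart_resultsofcompressionv}) is an immediate corollary of (\ref{lemmapart_resultsofcompressioniii}) and (\ref{lemmapart_resultsofcompressioniv}): every element of $A'$ lies in some $A'_k$, and every element of $A'_k$ has $x_k=0$.

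The only nonroutine step is (\ref{lemmapart_resultsofcompressioniv}); there is a minor subtlety in verifying that a $k$-compressed superset translates into the $k$th bottom layer of a subset behaving as expected, but the argument sketched above handles it. Nothing else requires work beyond bookkeeping of the iteration.
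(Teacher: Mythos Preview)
Your proposal is correct and follows exactly the approach the paper intends: the paper's one-line proof (``immediate from Lemma \ref{lemma_compression} and Lemma \ref{lemma_compressionstep}'') is precisely the iteration you spell out, and your argument for (\ref{lemmapart_resultsofcompressioniv}) is the ``easy induction on $k$'' already sketched inside the proof of Lemma \ref{lemma_compressionstep}. Nothing you do departs from the paper; you have simply made the bookkeeping explicit.
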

\begin{proof}
	All of these are immediate from Lemma \ref{lemma_compression} and Lemma \ref{lemma_compressionstep}.
\end{proof}

Note that even though some intermediate steps $C_i(C_{i-1}(\dots(C_1(A))\dots))$ need not give weak antichains, we see that after the $n$\thth compression we end up with a set which is necessarily a weak antichain.\medskip

For a set $A\subseteq \ZZZ^n$, define the \textit{complete $i$-compression} \begin{align*}\CC_{i}(A)=&\{(x_1,\dots,x_{i-1},a,x_{i+1},\dots,x_n):\\ &\textnormal{$a\in \ZZZ$ and there are at least $a+1$ elements $y$ of $A$ having for all $j\not =i$ $y_j=x_j$} \}.\end{align*}
Note that $|\CC_i(A)|=|A|$.
\begin{lemma}\label{lemma_completecompression}
	If $A\subseteq X_n$ then for any $j$ we have $|\pi_j(\CC_i(A))|\leq |\pi_j(A)|$.
\end{lemma}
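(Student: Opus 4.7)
The plan is to split into the trivial case $j=i$ and the substantial case $j\neq i$. When $j=i$, the definition of $\CC_i$ replaces each non-empty $i$-column of $A$ by an initial segment of the same size in the $i$-direction, so the set of non-empty $i$-columns is preserved; hence $\pi_i(\CC_i(A))=\pi_i(A)$ with equality, which is stronger than needed.

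For $j\neq i$, I would slice by the $n-2$ coordinates other than $i$ and $j$. Explicitly, for each tuple $w\in \ZZZ^{n-2}$ indexed by those coordinates, let $S_w\subseteq \ZZZ^2$ be the set of pairs $(a,b)$ for which the point having $i$-coordinate $a$, $j$-coordinate $b$, and remaining coordinates prescribed by $w$ lies in $A$, and similarly define $S'_w$ from $\CC_i(A)$. Writing $\rho\colon \ZZZ^2\to \ZZZ$ for the projection $(a,b)\mapsto a$, partitioning $\pi_j(A)$ according to $w$ yields $|\pi_j(A)|=\sum_w|\rho(S_w)|$ and the same identity for $\CC_i(A)$. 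So it suffices to prove the 2D claim $|\rho(S'_w)|\leq |\rho(S_w)|$ for every $w$.

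That 2D claim comes out almost immediately. For each fixed $b$, the definition of $\CC_i$ makes the $b$-column of $S'_w$ the initial segment $\{0,1,\ldots,s(b,w)-1\}$, where $s(b,w)=|\{a:(a,b)\in S_w\}|$. Taking the union over $b$ gives $\rho(S'_w)=\{0,1,\ldots,M_w-1\}$ with $M_w=\max_b s(b,w)$, so $|\rho(S'_w)|=M_w$. On the other hand, $|\rho(S_w)|=|\bigcup_b\{a:(a,b)\in S_w\}|\geq \max_b|\{a:(a,b)\in S_w\}|=M_w$, which is the required inequality. Summing over $w$ finishes the proof.

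One thing I would flag: the argument as sketched never invokes the hypothesis $A\subseteq X_n$, so the inequality appears to hold for arbitrary finite $A\subseteq \ZZZ^n$. I do not expect a genuine obstacle anywhere; the only conceptual step is choosing the right slicing (fixing all coordinates except $i$ and $j$), after which the problem collapses to the elementary comparison $\max_b|T_b|\leq |\bigcup_b T_b|$ in $\ZZZ$. The main thing I would double-check is whether I am missing a subtle use of $A\subseteq X_n$, though I suspect the hypothesis is stated only because the lemma is applied in that setting — and because the same hypothesis guarantees $\CC_i(A)\subseteq X_n$, which is presumably needed later.
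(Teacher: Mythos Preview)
Your argument is correct, including the observation that the hypothesis $A\subseteq X_n$ is never used; the inequality holds for any finite $A\subseteq\ZZZ^n$, and the hypothesis is present only because that is the context in which the lemma is applied (and to ensure $\CC_i(A)\subseteq X_n$ downstream).

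The paper takes a slightly different route for $j\neq i$: rather than slicing, it proves the containment $\pi_j(\CC_i(A))\subseteq \CC_i(\pi_j(A))$ by an element-chasing argument parallel to the earlier lemma on the one-step compression $C_i$. Given $x\in\CC_i(A)$, there are $x_i+1$ elements of $A$ agreeing with $x$ off the $i$\textsuperscript{th} coordinate, and their $\pi_j$-images witness that $\pi_j(x)\in\CC_i(\pi_j(A))$; since $|\CC_i(\pi_j(A))|=|\pi_j(A)|$, the bound follows. Your slicing argument is really the same combinatorics unpacked: fixing the $n-2$ coordinates $w$ and comparing $|\rho(S'_w)|=\max_b s(b,w)$ with $|\rho(S_w)|$ is exactly the slice-by-slice content of the containment. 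The paper's formulation yields a slightly stronger structural conclusion (the containment itself), while yours makes the numerics $|\rho(S'_w)|=\max_b s(b,w)$ completely explicit and avoids introducing the auxiliary object $\CC_i(\pi_j(A))$.
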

\begin{proof}
	The proof is essentially the same as for Lemma \ref{lemma_compression}. Indeed, let $j\not =i$ and suppose that $(x_1,\dots,x_{j-1},x_{j+1},\dots,x_n)\in \pi_j(\CC_i(A))$. So there is an $x\in \CC_i(A)$ with $k$\thth coordinate $x_k$ for all $k$, and hence there are $y^{(0)}, \dots, y^{(x_i)}\in A$ such that $y^{(a)}_k=x_k$ for all $k\not =i$ and all $0\leq a\leq x_i-1$, and $y^{(0)}_i<y^{(1)}_i<\dots<y^{(x_i)}_i$. But then $(x_1,\dots,x_{j-1},x_{j+1},\dots,x_n)\in \CC_i(\pi_j(A))$. It follows that $\pi_j(\CC_i(A))\subseteq \CC_i(\pi_j(A))$, giving the result.
	
	[Alternatively, we can deduce Lemma \ref{lemma_completecompression} from Lemma \ref{lemma_compression} by applying $C_i$ to $A$ then $A\setminus B_i(A)$ then $A\setminus(B_i(A)\cup B_i(A\setminus B_i(A)))$ and so on.]
\end{proof}


\begin{proof}[Proof of Lemma \ref{lemma_downset}]
	We may assume that $A\subseteq \ZZZ^n$. By Lemma \ref{lemma_compressedset} we may also assume $A\subseteq X_n$. Keep applying complete compressions while it changes our set. These do not increase any projection by Lemma \ref{lemma_completecompression}, and keeps our set a subset of $X_n$. Note that if $A\not =\CC_i(A)$ then $\sum_{x\in \CC_i(A)}\sum_jx_j<\sum_{x\in A}\sum_jx_j$, so the process must terminate. So the set $A'$ we end up with must have $\CC_{i}(A')=A'$ for all $i$, so it must be a down-set.
\end{proof}

\section{The asymptotic result}
We now show how Lemma \ref{lemma_downset} can be used to prove the asymptotic version of our theorem. The proof of the exact version (Theorem \ref{theorem_initialsegments}) in the next section will be independent of this section, but the proof below motivates some of the steps in the proof of Theorem \ref{theorem_initialsegments}. Recall that $g(n,m)$ denotes the smallest possible value of $\gap(A)=\sum_{i=1}^{n}|\pi_i(A)|-|A|$ as $A$ varies over weak antichains of size $m$ in $\ZZ^n$, and our aim is to prove the following result.
\explicit*

\begin{proof}
	By Lemma \ref{lemma_downset}, it suffices to consider sets $A\subseteq X_n$ which are down-sets. We prove the result by induction on $n$. The case $n=2$ is trivial, now assume $n\geq 3$ and the result holds for $n-1$. Clearly, we may assume $m\geq 1$.
	
	Define, for every $a\in\ZZZ$,
	$$L_a=\{(x_1,x_2,\dots,x_{n-1})\in\ZZZ^{n-1}: (x_1,x_2,\dots,x_{n-1},a)\in A\textnormal{ and $x_i\not =0$ for some $i<n$}\}.$$
	
	Let $K=\pi_n(B_n(A))\setminus L_0$. Note that $A$ can be written as a disjoint union of $K\times\{0\}$ and the $L_i\times\{i\}$. Also, $L_0\supseteq L_1\supseteq L_2\supseteq \dots$. Note furthermore that $|\pi_i(A)|=\sum_{j\geq 0}|\pi_i(L_j)|$ for $i<n$. It follows that
\begin{align*}\sum_{i=1}^{n}|\pi_i(A)|-|A|&=\sum_{i=1}^{n-1}|\pi_i(L_0)|+\sum_{j\geq 1}\left(\sum_{i=1}^{n-1}|\pi_i(L_j)|-|L_j|\right)\\
&\geq |L_0|+\sum_{j\geq 0}g(n-1,|L_j|)\\
&\geq |L_0|+\sum_{j\geq 0}c_{n-1}|L_j|^{1-1/(n-2)}.
\end{align*}
Write $|L_0|=x$. Since $0\leq |L_j|\leq x$ for each $j$, we have $|L_j|^{1-1/(n-2)}\geq \frac{|L_j|}{x}x^{1-1/(n-2)}.$
It follows that 
$$\sum_{i=1}^{n}|\pi_i(A)|-|A|\geq x+c_{n-1}\left(\sum_{j\geq 0}|L_j|\right)x^{-1/(n-2)}.$$

Note that $\sum_{j\geq 0} |L_i|=m-|K|$. By the (discrete) Loomis--Whitney inequality \cite{loomis1949inequality} (see \cite{bollobas1995projections} for a generalisation),
\[|K|^{n-2}\leq \prod_{i=1}^{n-1}|\pi_i(K)|\leq \left(\frac{\sum_{i=1}^{n-1}|\pi_i(K)|}{n-1} \right)^{n-1}.\]
But $\sum_{i=1}^{n-1}|\pi_i(K)|\leq |L_0|$ since we may assign to $(x_1,\dots,x_{i-1},x_{i+1},\dots,x_{n-1})\in \pi_i(K)$ the value $(x_1,\dots,x_{i-1},0,x_{i+1},\dots,x_{n-1})\in L_0$, giving an injective function from the disjoint union of the projections to $L_0$. It follows that
$$|K|^{n-2}\leq \left(\frac{x}{n-1}\right)^{n-1}$$
and so
\begin{align*}\sum_{i=1}^{n}|\pi_i(A)|-|A|&\geq x+c_{n-1}\left(m-\frac{1}{(n-1)^{1+1/(n-2)}}x^{1+1/(n-2)}\right)x^{-1/(n-2)}\\
&=\left(1-\frac{c_{n-1}}{(n-1)^{1+1/(n-2)}}\right)x+c_{n-1}mx^{-1/(n-2)}.
\end{align*}

Differentiation shows that this is minimised at
$$x=\left( \frac{c_{n-1}m}{(n-2)\left(1-\frac{c_{n-1}}{(n-1)^{1+1/(n-2)}}\right)}\right)^{1-1/(n-1)}$$
giving 
\begin{align*}\sum_{i=1}^{n}|\pi_i(A)|-|A|&\geq
\left(1-\frac{c_{n-1}}{(n-1)^{1+1/(n-2)}}\right)^{1/(n-1)}(n-1)(n-2)^{1/(n-1)-1}(c_{n-1}m)^{1-1/(n-1)}.
\end{align*}

But $c_{n-1}=\frac{1}{2}(n-2)(n-1)^{1/(n-2)}$, so
\begin{align*}1-\frac{c_{n-1}}{(n-1)^{1+1/(n-2)}}=\frac{n}{2(n-1)}
\end{align*}
and so
$$\left(1-\frac{c_{n-1}}{(n-1)^{1+1/(n-2)}}\right)^{1/(n-1)}(n-1)(n-2)^{1/(n-1)-1}c_{n-1}^{1-1/(n-1)}=\frac{1}{2}(n-1)n^{1/(n-1)}=c_n,$$
giving $g(n,m)\geq c_nm^{1-1/(n-1)}$, as claimed.

It remains to show that for any fixed $n$ we have $g(n,m)\leq (1+o(1))c_nm^{1-1/(n-1)}$. Let
$$A_N=\{(x_1,\dots,x_N):\textnormal{ $0\leq x_i\leq N-1$ for all $i$, and there is a $j$ such that $x_j=0$}\}.$$

Note that $A_N$ has $|\pi_i(A_N)|=N^{n-1}$ for each $i$, so
$$\sum_{i=1}^{n}|\pi_i(A_N)|=nN^{n-1}.$$ Moreover, it has size $$m_N=|A_N|=N^n-(N-1)^n=nN^{n-1}-\binom{n}{2}N^{n-2}+O(N^{n-3}).$$

Now pick $N$ such that $m_N\leq m<m_{N+1}$, and consider the weak antichain given as follows. Let $B$ be an arbitrary subset of $\{0\}\times [N,N+\lfloor(m_{N+1}-m_N)^{1/(n-1)}\rfloor]^{n-1}$ of size $m-m_N$. Note that $B$ has gap at most
$$(n-1)(m_{N+1}-m_N+1)^{(n-2)/(n-1)}=O(N^{(n-2)^2/(n-1)})$$
Put $A=A_N\cup B$. So $A$ has size $m$ and gap equal to the sum of gaps of $A_N$ and $B$, so $A$ has gap at most $$\binom{n}{2}N^{n-2}+O(N^{n-3})+O(N^{(n-2)^2/(n-1)}).$$

But $m=nN^{n-1}(1+o(1))$, so the gap is $c_nm^{1-1/(n-1)}(1+o(1))$, as required.
\end{proof}

\section{The exact result}

Recall that we defined a total order (called the balanced order) on $X_n$ as follows. Given $x, y\in X_n$, let $T=\{i: x_i\not = y_i \}$, let $x'=(x_i)_{i\in T}$, $y'=(y_i)_{i\in T}$. Write $x<y$ if $\max x'<\max y'$ or ($\max x'=\max y'$ and $\max\{i: x'_i=\max x'\}<\max\{i: y'_i=\max y'\}$). To see that this really is a total order, we need to show that if $x<y$ and $y<z$, then $x<z$. Set $M_x=\max x$ and $i_x=\max\{i: x_i=M_x\}$, and define $M_y, M_z, i_y, i_z$ similarly. If $M_x<M_y$ or $M_y<M_z$, it is clear that $x<z$. If $M_x=M_y=M_z$ and either $i_x<i_y$ or $i_y<i_z$, $x<z$ again follows. Finally, if $M_x=M_y=M_z$ and $i_x=i_y=i_z$ then $x<z$ follows from induction on $n$. Recall that the result we are trying to prove is the following.

\initialsegments*

If $A\subseteq X_n$
, we define the balanced-$i$-compression $\CCC_i(A)$ as follows. For each $a$, write
$$L^i_a(A)=\{(x_1,\dots, x_{i-1},x_{i+1},\dots,x_n)\in X_{n-1}: (x_1,\dots, x_{i-1},a,x_{i+1},\dots,x_n)\in A \}.$$
Also write
\[K^i(A)=
\{(x_1,\dots,x_{i-1},x_{i+1},\dots,x_n)\in \ZZ_{>0}^n: (x_1,\dots,x_{i-1},0,x_{i+1},\dots,x_n)\in A\}.\]
(Here $\ZZ_{>0}$ denotes the set of positive integers.) We define $A'=\CCC_i(A)$ by letting $L^i_a(A')$ be the initial segment of the balanced order on $X_{n-1}$ of size $|L_i(A)|$ for each $a$, and letting $K^i(A')$ be the first $|K^i(A)|$ elements of the ordering $\prec$ on $\mathbb{Z}_{>0}^{n-1}$ given by $(x_1,\dots,x_{i-1},x_{i+1},\dots,x_n)\prec(y_1,\dots,y_{i-1},y_{i+1},\dots,y_n)$ if and only if $(x_1,\dots,x_{i-1},0,x_{i+1},\dots,x_n)<(y_1,\dots,y_{i-1},0,y_{i+1},\dots,y_n)$ (in the balanced order) on $X_n$. Observe that $|A'|=|A|$.

For these balanced-$i$-compressions to be useful, we will have to establish another extremal property of initial segments.
For $A\subseteq X_n$, write
$$S(A)=\{(x_1,\dots,x_n)\in \mathbb{Z}_{>0}^n: \textnormal{ for all $k$ we have } (x_1,\dots,x_{k-1},0,x_{k+1},\dots,x_n)\in A \}.$$

\begin{lemma}\label{lemma_<compression}
	Let $n\geq 3$. Suppose that initial segments $I$ of the balanced order maximise $|S(I)|$ among down-sets in $X_{n-1}$ of given size. Then whenever $A$ is a down-set in $X_{n}$ and $i\in\{1,\dots,n\}$, then $A'=\CCC_i(A)$ satisfies the following.
	\begin{enumerate}[(i)]
		\item $A'$ is a down-set.
		\item $|S(A')|\geq |S(A)|$.
		\item If it is also true that initial segments of the balanced order minimise the gap among subsets of $X_{n-1}$ of given size, then $\gap(A')\leq \gap(A)$.
	\end{enumerate}
\end{lemma}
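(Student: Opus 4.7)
The plan is to verify the three parts in turn by decomposing $A$ along coordinate $i$ into its slices $L^i_a(A)$ (for $a\geq 0$) and its ``interior'' $K^i(A)$, and using that $\CCC_i$ replaces each of these by an initial segment of the appropriate order.

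For (i), each slice $L^i_a(A')$ is a down-set in $X_{n-1}$ because initial segments of the balanced order are (the order refines coordinate-wise), and the slices are nested because $|L^i_a(A)|$ is decreasing in $a$ (as $A$ is a down-set). The only nontrivial case for the down-set property of $A'$ is: if $y\in K^i(A')$ and $x\in X_{n-1}$ with $x\leq y$ coordinate-wise, then $x\in L^i_0(A')$. I would introduce the ``minimal lift'' map $y^\star\colon X_{n-1}\to\mathbb{Z}_{>0}^{n-1}$ defined by $y^\star(x)_j=\max(x_j,1)$ and verify by case analysis that $y^\star$ is order-preserving from the balanced order to $\prec$. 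Since $K^i(A')$ is coordinate-wise downward closed in $\mathbb{Z}_{>0}^{n-1}$, the shadow $\{x\in X_{n-1}:\exists y\in K^i(A'),\ x\leq y\}$ equals $\{x:y^\star(x)\in K^i(A')\}$, and by the monotonicity of $y^\star$ this is an initial segment of the balanced order on $X_{n-1}$. Its size is at most $|L^i_0(A)|=|L^i_0(A')|$ by the down-set property of the original $A$ together with the isoperimetric fact that among down-sets of $\mathbb{Z}_{>0}^{n-1}$ of a given size, an initial $\prec$-segment has the smallest $X_{n-1}$-shadow.

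For (ii), splitting $x\in S(A)$ into the ``$\pi_i(x)\in K^i(A)$'' condition (from $k=i$) and the ``$\pi_i(x)\in S(L^i_{x_i}(A))$'' condition (from $k\neq i$) gives $|S(A)|=\sum_{a\geq 1}|K^i(A)\cap S(L^i_a(A))|$, and the same for $A'$. The hypothesis yields $|S(L^i_a(A'))|\geq|S(L^i_a(A))|$ for each $a\geq 1$. I would further show that $S(I)$ is itself an initial segment of $\prec$ whenever $I$ is an initial segment of the balanced order on $X_{n-1}$: this reduces to $\prec$-monotonicity of $\rho(y)=\max_k r(y^{(k)})$, where $y^{(k)}$ is $y$ with $k$-th coordinate zeroed and $r$ denotes balanced-order rank. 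Given this, both $S(L^i_a(A'))$ and $K^i(A')$ are initial segments of $\prec$, so their intersection has size $\min(|K^i(A)|,|S(L^i_a(A'))|)\geq \min(|K^i(A)|,|S(L^i_a(A))|)\geq |K^i(A)\cap S(L^i_a(A))|$; summing over $a$ gives (ii).

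For (iii), decompose $|\pi_k(A)|=|\pi_k(A_0)|+\sum_{a\geq 1}|\pi_k(L^i_a(A))|$ for each $k\neq i$, where $A_0$ is the $0$-slice of $A$ at coordinate $i$. A key small observation is that $\pi_k(A_0)=\pi_k(L^i_0(A))$: since $A_0$ is a down-set in $\ZZZ^{n-1}$, every element of $\pi_k(A_0)$ is witnessed by some $x$ with $x_k=0$, which lies in $L^i_0(A)$. Applying the gap-minimisation hypothesis to each slice $L^i_a$ (viewed as a subset of $X_{n-1}$) gives $\sum_{k\neq i}|\pi_k(L^i_a(A'))|\leq\sum_{k\neq i}|\pi_k(L^i_a(A))|$ for every $a\geq 0$. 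Combined with $|\pi_i(A)|=|L^i_0(A)|+|K^i(A)|=|\pi_i(A')|$ and $|A|=|A'|$, summing yields $\gap(A')\leq \gap(A)$. The main obstacles will be in (i) and (ii): establishing monotonicity from the balanced order to $\prec$ --- of $y^\star$ in (i) and of $\rho$ in (ii) --- and the accompanying isoperimetric bound on shadows. Each reduces to a careful case analysis of the balanced-order definition, tracking what happens when a maximum coordinate is zeroed or a zero coordinate is raised to one.
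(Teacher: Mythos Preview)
Your treatment of (ii) and (iii) matches the paper's: the slice decomposition $|S(B)|=\sum_{a\geq 1}|K^i(B)\cap S(L^i_a(B))|$, the nestedness of initial segments, and the formula $\gap(B)=|L^i_0(B)|+\sum_{a\geq 0}\gap(L^i_a(B))$ are exactly what the paper uses.

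For (i), however, you take a detour. The condition you need is precisely $K^i(A')\subseteq S(L^i_0(A'))$. The paper obtains this directly: once one knows that $S(I)$ is an initial $\prec$-segment whenever $I$ is a balanced initial segment (exactly the fact you set out to prove in (ii)), both $K^i(A')$ and $S(L^i_0(A'))$ are initial $\prec$-segments, and the chain
\[
|K^i(A')|=|K^i(A)|\leq |S(L^i_0(A))|\leq |S(L^i_0(A'))|
\]
(the middle inequality from $A$ being a down-set, the last from the hypothesis of the lemma) gives the containment. You instead route through the dual picture, introducing a \emph{separate} isoperimetric statement --- that initial $\prec$-segments in $\mathbb{Z}_{>0}^{n-1}$ minimise the $X_{n-1}$-shadow among down-sets of given size --- and the map $y^\star$. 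That statement is not the hypothesis, and ``reduces to case analysis'' is not a proof of it; in fact the cleanest way to derive it is to combine the hypothesis with the very fact about $S(I)$ you prove in (ii). So your argument for (i) is salvageable but circular in spirit: you prove in (ii) exactly the lemma that would make (i) a two-line size comparison, yet in (i) you appeal to an unproved dual of it instead. Reordering so that the ``$S(I)$ is a $\prec$-initial-segment'' claim comes first, and then using it for both (i) and (ii), gives the paper's proof and removes the need for $y^\star$ and the shadow isoperimetry altogether.
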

\begin{proof}
	(i) It is clear that $L_0^i(A')\supseteq L_1^i(A')\supseteq\dots$, and that the $L_a^i(A')$ and  $K^i(A')$ are down-sets, so it remains to show that $K^i(A')\subseteq S(L^i_0(A'))$. Note that we know this is true for $A$ instead of $A'$ since $A$ is a down-set. Also, it is easy to see that if $x,y\in \mathbb{Z}_{>0}^{n-1}$, $I$ is an initial segment of the balanced order on $X_{n-1}$, $x\prec y$ in the ordering $\prec$ of $\mathbb{Z}_{>0}^{n-1}$ defined earlier and $y\in S(I)$ then $x\in S(I)$. Indeed, if $T=\{j: x_j\not=y_j\}$ and $k=\min\{l\in T: y_l=\min_{j\in T}y_j\}$, then we have the following. \newline If $j\not \in T$ then $(x_1,\dots,x_{j-1},0,x_{j+1},\dots,x_{n-1})<(y_1,\dots,y_{j-1},0,y_{j+1},\dots,y_{n-1})$.\newline If $j\in T$ then $(x_1,\dots,x_{j-1},0,x_{j+1},\dots,x_{n-1})\leq(y_1,\dots,y_{k-1},0,y_{k+1},\dots,y_{n-1})$.
	
	So $S(L_0^i(A'))$ and $K^i(A')$ are both initial segments. But $|S(L_0^i(A'))|\geq |S(L_0^i(A))|\geq |K^i(A)|=|K^i(A')|$, proving (i).
	
	(ii) Note that for any $B\subseteq X_n$ and $a>0$ we have
	$$\{(x_1,\dots,x_{i-1},x_{i+1},\dots,x_n)\in\mathbb{Z}_{>0}^{n-1}: (x_1,\dots,x_{i-1},a,x_{i+1},\dots,x_n)\in S(B)\}=K^i(B)\cap S(L^i_a(B)).$$
	But for each $a$ we have $|S(L^i_a(A))|\leq |S(L^i_a(A'))|$, and $K^i(A'), S(L^i_a(A'))$ are nested. This implies that \begin{align*}|K^i(A)\cap S(L^i_a(A))|&\leq\min(|K^i(A)|,|S(L^i_a(A))|)\\&\leq\min(|K^i(A')|,|S(L^i_a(A'))|)=|K^i(A')\cap S(L^i_a(A'))|,\end{align*} proving (ii).
	
	(iii) For any down-set $B\subseteq X_n$ we have $\gap(B)=|L^i_0(B)|+\sum_{a\geq 0}\gap(L^i_a(B))$. But then (iii) follows trivially from the assumption that initial segments minimise $\gap$ on $X_{n-1}$.	
\end{proof}

\begin{lemma}\label{lemma_<compressedproperty}
	Suppose $n\geq 3$ and $A\subseteq X_n$ is a down-set having $\CCC_i(A)=A$ for all $i$. Assume that $x<y$ with $x\not \in A$ and $y\in A$. Then
	\begin{enumerate}[(i)]
		\item $x$ has a unique coordinate which is zero.
		\item if $x_j=y_j$ for some $j$, then $x_j=y_j=0$ and $y$ has at least one other coordinate which is zero.
	\end{enumerate}
\end{lemma}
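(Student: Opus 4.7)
The plan is to prove (i) and (ii) by contradiction, exhibiting $x\in A$ in each failing case by combining the hypothesis $\CCC_i(A)=A$ with the down-set property. Write $\tilde{z}^{(i)}$ for the vector obtained by deleting the $i$-th coordinate of $z$, put $T=\{m:x_m\neq y_m\}$ and $J=[n]\setminus T$, and let $Z=\{m:x_m=0\}$. I will prove (ii) first and then use it inside the proof of (i).

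A preliminary observation handles the positive-agreement half of (ii): if $x_l=y_l=a>0$ for some $l$, then both $\tilde{x}^{(l)}$ and $\tilde{y}^{(l)}$ lie in $X_{n-1}$ and share the diff set $T$ with $x,y$, so $\tilde{x}^{(l)}<\tilde{y}^{(l)}$ in the balanced order on $X_{n-1}$; the initial-segment property of $L^l_a(A)$ puts $x\in A$, contradicting $x\notin A$. Hence every agreement must be at value $0$.

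For the remaining half of (ii), suppose $x_j=y_j=0$ but, for contradiction, $j$ is the unique zero of $y$. The preliminary step forces $J=\{j\}$ and $T=[n]\setminus\{j\}$. If $x$ too has its unique zero at $j$, then $\tilde{x}^{(j)},\tilde{y}^{(j)}\in\mathbb{Z}_{>0}^{n-1}$, $\tilde{y}^{(j)}\in K^j(A)$, and by the definition of $\prec$ the relation $\tilde{x}^{(j)}\prec\tilde{y}^{(j)}$ is equivalent to $x<y$; the initial-segment property of $K^j(A)$ then gives $x\in A$. If instead $x$ has another zero, let $M_y=\max_T y$ and $I_y=\max\{m\in T:y_m=M_y\}$, pick any $l\in[n]\setminus\{j,I_y\}$ (available since $n\geq 3$), and let $w$ be $y$ with $y_l$ replaced by $0$; down-closure gives $w\in A$ and $\tilde{w}^{(j)}\in L^j_0(A)$. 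A direct max-then-index comparison shows $\tilde{x}^{(j)}<\tilde{w}^{(j)}$ in $X_{n-1}$: on their common diff set the max of $\tilde{x}^{(j)}$ is $M_x:=\max_T x$ (a zero coordinate of $x$ cannot be its argmax), while the max of $\tilde{w}^{(j)}$ is $M_y$ attained at $I_y$ (since $l\neq I_y$); either $M_x<M_y$ settles it by max, or $M_x=M_y$ and then $I_x<I_y$, which is exactly the tie-breaker in $x<y$, settles it by index. So $x\in A$, contradiction.

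For (i), suppose $x$ has at least two zeros. If some $j\in J$ exists, (ii) forces $y$ to have at least two zeros as well, so both $\tilde{x}^{(j)},\tilde{y}^{(j)}\in X_{n-1}$, they share $T$ with $x,y$, and $\tilde{x}^{(j)}<\tilde{y}^{(j)}$; $L^j_0(A)$ then places $x\in A$. If $J=\emptyset$, pick $k\in Z\setminus\{I_y\}$ (possible since $|Z|\geq 2$); the same max/index analysis, now using $x_k=0$ and $k\neq I_y$, gives $\tilde{x}^{(k)}<\tilde{y}^{(k)}\in L^k_0(A)$ and hence $x\in A$. The most delicate step is the second sub-case of (ii): since $\tilde{y}^{(j)}$ lives in $K^j(A)$ rather than $L^j_0(A)$, a direct comparison between $\tilde{x}^{(j)}$ and $\tilde{y}^{(j)}$ is not available, and the auxiliary element $w$ together with the careful choice $l\neq I_y$ is what makes the index comparison go through when the maxima coincide.
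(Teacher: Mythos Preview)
Your proof is correct and follows essentially the same approach as the paper: both arguments exploit the invariance $\CCC_l(A)=A$ to compare the $(n-1)$-dimensional slices $\tilde{x}^{(l)},\tilde{y}^{(l)}$ inside $L^l_a(A)$ or $K^l(A)$, and both use the down-set property to manufacture an auxiliary element (your $w$, the paper's $y'$) by zeroing a coordinate of $y$ away from the max-index $I_y$. The organisation differs slightly---the paper first records the trichotomy that, whenever $x_l=y_l$, the common value is $0$ and \emph{exactly one} of $x,y$ has an additional zero, and then bootstraps this via $y'$ to get (i) and (ii) together; you instead prove (ii) first by an explicit case split and then feed it into (i), handling the $J=\emptyset$ case of (i) by a direct comparison of $\tilde{x}^{(k)}$ with $\tilde{y}^{(k)}$ rather than via an auxiliary element---but the underlying mechanism is identical.
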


\begin{proof}
	Observe the following. If $x_l=y_l$ for some $l$, then $\CCC_l(A)=A$ shows that in fact it must be the case that $x_l=y_l=0$ and exactly one of $x,y$ have a zero coordinate not at the $i$\thth position. It follows that if we write $i=\max\{j: y_j=\max y\}$ then (no longer assuming $x_l=y_l$)
	\begin{align*}
	y_i&\geq x_j\textnormal{ for all $j$, and}\\
	y_i&> x_j\textnormal{ for all $j\geq i$.}
	\end{align*}
	Assume again that $x_l=y_l$ some $l$, necessarily $l\not =i$. Pick some $k\not =i, l$. Then the vector $y'$ obtained by replacing the $k$\thth coordinate of $y$ by $0$ is in $A$ (since $A$ is a down-set), and we have $y'>x$. By the same argument as above, we deduce that $x_l=y'_l=0$, and -- since $y'_k=0$ -- it must be the case that $x$ has no zero coordinates other than the $l$\thth one. We deduce that whenever $x_l=y_l$ for some $l$, then $x_l=y_l=0$, $x_s\not =0$ for $s\not =l$, and there is an $s\not =l$ such that $y_s=0$. No longer assuming $x_l=y_l$, this also shows that $x$ has at most one (so exactly one) zero coordinate: if $x_k=x_l=0$ ($k\not=l$), then we may assume that $k\not =i$ and then the vector $y'$ obtained by replacing the $k$\thth coordinate of $y$ by $0$ contradicts the observations above.
\end{proof}

\begin{lemma} For every $n\geq 2$, initial segments $I$ of the balanced order maximise $|S(I)|$ among down-sets in $X_n$ of given size.
\end{lemma}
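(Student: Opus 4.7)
The plan is to induct on $n$. For the base case $n=2$, a down-set $A\subseteq X_2$ of size $m\ge 1$ has the form $\{(i,0):0\le i\le a\}\cup\{(0,j):1\le j\le b\}$ with $a+b+1=m$, so $|S(A)|=ab$; for fixed $a+b$ this is maximised when $|a-b|\le 1$, exactly the structure of the balanced-order initial segment of size $m$.

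For the inductive step, assume the result for $n-1$; this makes the hypothesis of Lemma \ref{lemma_<compression} available, so each $\CCC_i$-compression preserves the down-set property, preserves the size, and weakly increases $|S|$. I would introduce the monovariant $\Phi(B):=\sum_{x\in B}\operatorname{rank}_<(x)$ (the sum of balanced-order ranks) and prove the sub-claim that $\CCC_i$ weakly decreases $\Phi$, strictly unless $\CCC_i(B)=B$. The key observation is that two vectors of $X_n$ agreeing on their $i$th coordinate compare in the $X_n$-balanced order exactly as their $i$th-coordinate-deleted projections compare---in the $X_{n-1}$-balanced order if they lie in $X_{n-1}$, and in $\prec$ if they lie in $\mathbb{Z}_{>0}^{n-1}$---so replacing each $L^i_a(B)$ and $K^i(B)$ by its initial segment can only lower $\Phi$. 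Choosing $A$ among down-sets of size $m$ first to maximise $|S|$ and then to minimise $\Phi$ therefore forces $\CCC_i(A)=A$ for every $i$.

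Finally, I would show such $A$ coincides with the initial segment $I_m$. Suppose otherwise, and let $y=\min(A\setminus I_m)$ and $x=\max(I_m\setminus A)$ in the balanced order, so $x<y$, $x\notin A$, $y\in A$. By Lemma \ref{lemma_<compressedproperty}(i), $x$ has a unique zero at some position $k$. My strategy is to produce $z\in A$ with $x<z$ that shares a positive coordinate with $x$, contradicting Lemma \ref{lemma_<compressedproperty}(ii) (which demands that any shared coordinate of $x$ and a $z\in A$ above $x$ must be a shared zero, with $z$ having yet another zero). The natural candidate is $z=y$ with the $j$th coordinate replaced by $x_j$, for some $j\neq k$ satisfying $0<x_j\le y_j$: then $z\in A$ by the down-set property, $z_j=x_j>0$ is a shared positive coordinate, and a balanced-order comparison (using $\max x\le\max y$ and the no-shared-coordinate information from Lemma \ref{lemma_<compressedproperty}(ii)) should yield $x<z$. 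The existence of such $j$ holds in most configurations; in the case $y_k=0$, the additional zero of $y$ given by Lemma \ref{lemma_<compressedproperty}(ii) provides extra flexibility in the construction.

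The main obstacle will be the degenerate configuration $y_k>0$ with $x_j>y_j$ for every $j\neq k$, where no valid $j$ exists. In this case I would argue more directly using $\CCC_i$-stability: the $\CCC_k$-stability makes $K^k(A)$ an initial segment of $\prec$, and the various initial-segment constraints placed on $A$ by stability under each $\CCC_i$ (together with the nested relation $K^k(A)\subseteq S(L^k_0(A))$ noted in the proof of Lemma \ref{lemma_<compression}) should force enough elements into $A$ that the rank of $x_{\mathrm{view},k}$ in $\prec$ lies within $|K^k(A)|$, whence $x\in A$, contradicting our assumption.
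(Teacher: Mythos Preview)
Your reduction to a $\CCC_i$-stable down-set (including the monovariant $\Phi$ and the base case) is correct and matches the paper. The gap is in the endgame. Your plan is to contradict Lemma~\ref{lemma_<compressedproperty}(ii) by producing $z\in A$ with $x<z$ and $z_j=x_j>0$, taking $z$ to be $y$ with its $j$th coordinate lowered to $x_j$ for some $j\neq k$ with $0<x_j\le y_j$. But the claimed comparison $x<z$ can fail even under all the hypotheses you invoke. For $n=3$, take $x=(0,5,4)$ and $y=(1,6,0)$: then $x<y$, $x$ and $y$ share no coordinate, $k=1$, and the only admissible $j$ is $j=2$, giving $z=(1,5,0)$; comparing $x$ and $z$ on $T=\{1,3\}$ yields $\max x'=4>1=\max z'$, so $z<x$. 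Nothing in Lemma~\ref{lemma_<compressedproperty} or in your specific choices of $x,y$ excludes this configuration, so the argument as written does not close. Your handling of the ``degenerate configuration'' ($y_k>0$ and $x_j>y_j$ for all $j\neq k$) is also only a sketch; the assertion that $\CCC_i$-stability ``should force enough elements into $A$'' to place $x$ in $A$ is not substantiated.

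The paper proceeds differently at this point: it does \emph{not} try to contradict Lemma~\ref{lemma_<compressedproperty} directly, but performs a swap. One takes $y$ \emph{maximal} (not minimal) in the balanced order, sets $i=\max\{j:y_j=\max y\}$, and then chooses $x$ carefully: if some $x\notin A$ with $x<y$ has its unique zero off position $i$, take the minimal such $x$; otherwise take the minimal $x\notin A$ with $x<y$. With these choices $A'=(A\setminus\{y\})\cup\{x\}$ is again a down-set, and one shows $|S(A')|\ge|S(A)|$ by explicitly bounding $|S(A)\setminus S(A')|\le y_i-1$ and $|S(A')\setminus S(A)|\ge y_i-1$ via Lemma~\ref{lemma_<compressedproperty}. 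The delicate point---and the reason the paper's two-tiered choice of $x$ is needed---is precisely the case $s=i$ (the zero of $x$ sits at the argmax position of $y$), which is the same obstruction that breaks your construction of $z$.
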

\begin{proof}
	We prove the statement by induction on $n$. The case $n=2$ is trivial, now assume $n\geq 3$ and the result holds for smaller values of $n$.
	Let $A$ be any subset of $X_n$, we show the initial segment $I$ of same size has $|S(I)|\geq |S(A)|$. Taking a down-set $A'$ in $X_n$ minimising $\sum_{x\in A'}\textnormal{(position of $x$ in the balanced order)}$ among sets with $|A'|=|A|$ and $|S(A')|\geq |S(A)|$, we may assume that $\CCC_i(A)=A$ for each $i$ (by Lemma \ref{lemma_<compression}). Suppose there are $x,y\in X_n$ with $x<y$, $y\in A$ and $x\not \in A$.
	
	Take $y$ to be maximal (in the balanced order). Let $i=\max\{j: y_j=\max y\}$. If there is an $x\not\in A$ with $x<y$ and the unique zero coordinate not being at the $i$\thth position, pick the minimal of these (in the balanced order). Otherwise pick $x\not\in A$ which is minimal. Consider $A'=A\setminus\{y\}\cup\{x\}$. Note that $A'$ is again a down-set.
	
	We show that $|S(A')|\geq |S(A)|$. (This would give a contradiction.) If $y$ has more than one zero coordinates, then $S(A)\setminus S(A')=\emptyset$, so the claim is clear. Otherwise we must have $x_l\not =y_l$ for all $l$ by Lemma \ref{lemma_<compressedproperty}, and $y$ has a unique zero coordinate $y_t$. Observe that
	\begin{align*}S(A)\setminus S(A')=\{&(y_1,\dots,y_{t-1},a,y_{t+1},\dots,y_n):\\ &a\in\mathbb{Z}_{>0}\textnormal{ and replacing any coordinate by $0$ we get an element of $A$}\}.
	\end{align*}
	Recall that there is a unique $s$ such that $x_s=0$. We claim that $S(A)\setminus S(A')$ is empty unless $s=i$. Indeed, suppose $s\not =i$ and $S(A)\setminus S(A')$ has an element $z$ corresponding to $a\geq1$. 
	Let $z'$ be obtained from $z$ by setting the $s$\thth coordinate to be zero. Then $z'\in A$, $z'>x$, $x_s=z'_s=0$ and there is a unique coordinate at which $z'$ is zero, contradicting Lemma \ref{lemma_<compressedproperty}.

	So we may assume $s=i$.
	Note that if $a\geq x_t$ and the corresponding vector appears in the set above, then $A$ has an element $z$ with $z_i=y_i$ and $z_t=x_t\not =0$ (using that $n\geq 3$ and that $A$ is a down-set. Note that $x_t\not =0$ since $x_l\not =y_l$ for all $l$.) But then $z>x$, so this contradicts Lemma \ref{lemma_<compressedproperty}. It follows that $|S(A)\setminus S(A')|\leq x_t-1\leq y_i-1$.
	
	Furthermore, since $i=s$,
	\begin{align*}S(A')\setminus S(A)=\{&(x_1,\dots,x_{i-1},a,x_{i+1},\dots,x_n):\\ &a\in\mathbb{Z}_{>0}\textnormal{ and replacing any coordinate by $0$ we get an element of $A'$}\}.
	\end{align*}
	
	Also, by our choice of $x$, any $z\not \in A$ with $z<y$ has $z_i=0$ and $z_l\not =0$ for $l\not =i$. But this easily shows that for $1\leq a\leq y_i-1$, the corresponding vector lies in $S(A')\setminus S(A)$.  So $|S(A')\setminus S(A)|\geq y_i-1\geq|S(A)\setminus S(A')|$.
	
	So we get a contradiction, finishing the proof.
\end{proof}

\begin{lemma}\label{lemma_gapminimiser}
	For every $n\geq 2$, initial segments $I$ of the balanced order minimise $\gap(I)$ among down-sets in $X_n$ of given size.
\end{lemma}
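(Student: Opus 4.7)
The plan is to induct on $n$. The base case $n=2$ is immediate, since any nonempty down-set in $X_2$ has gap exactly $1$. For $n\geq 3$, suppose for contradiction that some down-set $A\subseteq X_n$ satisfies $\gap(A)<\gap(I_{|A|})$, where $I_m$ denotes the size-$m$ initial segment of the balanced order. Among all such counterexamples, choose $A$ minimising the position sum $P(A):=\sum_{x\in A}p(x)$, where $p(x)$ is the position of $x$ in the balanced order on $X_n$.

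The first step is to deduce $\CCC_i(A)=A$ for every $i$. Lemma~\ref{lemma_<compression}(iii) applies, its two hypotheses being supplied by the preceding lemma (initial segments maximise $|S|$ on $X_{n-1}$) and the inductive hypothesis (initial segments minimise gap on $X_{n-1}$); hence $\CCC_i(A)$ is a down-set of the same size with $\gap(\CCC_i(A))\leq\gap(A)$, which makes it again a counterexample. A slice-by-slice check---on each slice $\{x_i=a\}$ the balanced order on $X_n$ restricts to the $X_{n-1}$-balanced order on the projections, and $\CCC_i$ replaces the L- and K-type contents of each slice by initial segments, moving elements only to smaller $X_n$-positions---shows that $\CCC_i$ weakly decreases $P$, strictly whenever $\CCC_i(A)\neq A$. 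Minimality of $P$ therefore forces $\CCC_i(A)=A$ for every $i$.

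The second step is the swap. Set $m=|A|$; since $A\neq I_m$, let $y=\max A$ and $x=\min(I_m\setminus A)$ in balanced order. Balanced order strictly refines product order on $X_n$ (comparing the max value on the difference set), so $y$ is product-maximal in $A$ and every element product-strictly below $x$ lies in $A\setminus\{y\}$. Consequently $A':=(A\setminus\{y\})\cup\{x\}$ is a down-set of size $m$ with $P(A')=P(A)-p(y)+p(x)<P(A)$. To compare gaps, write $\gap(A')-\gap(A)=\sum_j(e_j-d_j)$, where $d_j=1$ iff $y$ is the unique preimage of $\pi_j(y)$ in $A$, and $e_j=1$ iff $\pi_j(x)\notin\pi_j(A\setminus\{y\})$. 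The product-maximality of $y$ combined with the down-set property gives $d_j=1\iff y_j=0$, so $\sum_j d_j$ equals the number $z(y)$ of zero coordinates of $y$, which is at least $1$. By Lemma~\ref{lemma_<compressedproperty}, $x$ has a unique zero coordinate $s$: for $j\neq s$, zeroing the $j$-th coordinate of $x$ produces an element of $A\setminus\{y\}$ with the same $\pi_j$-image, so $e_j=0$; for $j=s$, the only element of $X_n$ whose $\pi_s$-image is $\pi_s(x)$ is $x$ itself (all other coordinates of $x$ are positive), which is not in $A$, so $e_s=1$. Hence $\gap(A')-\gap(A)=1-z(y)\leq 0$, so $A'$ is another counterexample with strictly smaller $P$---contradicting the minimality of $A$.

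The main obstacle is structural rather than computational: $\CCC$-compressed down-sets are genuinely not forced to be initial segments (explicit examples already exist at $n=3$), so a separate swap step is unavoidable. Using $P$ as the well-founded invariant---compression weakly decreases $P$ and the swap strictly decreases $P$ while keeping $\gap$ non-increasing---ties both steps together and lets the minimum-counterexample framework close without any quantitative comparison between $\gap(A)$ and $\gap(I_m)$.
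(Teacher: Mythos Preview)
Your argument is correct and follows essentially the same route as the paper's: induct on $n$, use the minimum--position-sum / minimum-counterexample device to reduce to $\CCC_i(A)=A$ for all $i$ via Lemma~\ref{lemma_<compression}(iii), then swap the balanced-maximal $y\in A$ for the balanced-minimal $x\notin A$ and compare projections using Lemma~\ref{lemma_<compressedproperty}. Your projection bookkeeping via $e_j,d_j$ and your verification that $d_j=1\iff y_j=0$ and $e_j=1\iff j=s$ are exactly the content of the paper's two sentences about $\pi_j(A')\setminus\pi_j(A)$ and $\pi_t(A)\setminus\pi_t(A')$, and your choice $y=\max A$, $x=\min(I_m\setminus A)$ coincides with the paper's ``$y$ maximal, $x$ minimal''; you have simply spelled out more of the routine checks (e.g.\ that $\CCC_i$ decreases the position sum, and the $n=2$ base case) than the paper does.
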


\begin{proof}
	Again we prove this by induction on $n$. The case $n=2$ is trivial, now assume $n\geq 3$ and the result holds for smaller values of $n$.
	Let $A$ be any subset of $X_n$, we show the initial segment of same size has a gap which is not bigger. Taking a down-set $A'$ in $X$ minimising $\sum_{x\in A'}\textnormal{(position of $x$ in the balanced order)}$ among sets with $|A'|=|A|$ and $\gap(A')\leq \gap(A)$, we may assume that $\CCC_i(A)=A$ for each $i$ (by Lemma \ref{lemma_<compression}). Suppose that there are $x,y\in X_n$ with $x<y$, $y\in A$ and $x\not \in A$. Take $y$ to be maximal and $x$ to be minimal (in the balanced order). Let $A'=A\setminus\{y\}\cup\{x\}$. Note that $A'$ is a down-set.
	
	By Lemma \ref{lemma_<compressedproperty}, there is a unique $s$ such that $x_s=0$. Then $\pi_j(A')\setminus \pi_j(A)=\emptyset$ if $j\not =s$ and $|\pi_s(A')\setminus \pi_s(A)|=1$. On the other hand, if $t$ is such that $y_t=0$ then $|\pi_t(A)\setminus \pi_t(A')|=1$. It follows that $\gap(A')\leq \gap(A)$, giving a contradiction.
\end{proof}

\begin{proof}[Proof of Theorem \ref{theorem_initialsegments}]
	Immediate from Lemma \ref{lemma_gapminimiser} and Lemma \ref{lemma_downset}.
\end{proof}

\bibliography{Bibliography}

\begin{thebibliography}{1}

\bibitem{bollobas1986combinatorics}
B.~Bollob{\'a}s.
\newblock {\em Combinatorics: set systems, hypergraphs, families of vectors,
  and combinatorial probability}.
\newblock Cambridge University Press, 1986.

\bibitem{bollobas1995projections}
B.~Bollob{\'a}s and A.~Thomason.
\newblock Projections of bodies and hereditary properties of hypergraphs.
\newblock {\em Bulletin of the London Mathematical Society}, 27(5):417--424,
  1995.

\bibitem{engel2018projection}
K.~Engel, T.~Mitsis, C.~Pelekis, and C.~Reiher.
\newblock Projection inequalities for antichains.
\newblock {\em arXiv preprint arXiv:1812.06496}, 2018.

\bibitem{loomis1949inequality}
L.~H. Loomis and H.~Whitney.
\newblock An inequality related to the isoperimetric inequality.
\newblock {\em Bulletin of the American Mathematical Society}, 55(10):961--962,
  1949.

\end{thebibliography}
\bibliographystyle{abbrv}	
	
\end{document}